\newtheorem{theorem}{Theorem}[section]
\newtheorem{lemma}[theorem]{Lemma}
\theoremstyle{definition}
\newtheorem{remark}{Remark}
\title{An Erd\H{o}s-Kac theorem for Smooth and Ultra-smooth integers}
\author{Marzieh Mehdizadeh}
\address{ D\'{e}partment de Math\'{e}matiques et Statistique,Universit\'{e} de Montr\'{e}al, CP 6128, succ.
Centre-ville, Montr\'{e}al, QC, Canada H3C 3J7.}
\email{marzieh.mehdizadeh@gmail.com}
\date{}
\begin{document}

\maketitle

\begin{abstract}

We prove an Erd\H{o}s-Kac type of theorem for the set $S(x,y)=\{n\leq x: p|n \Rightarrow p\leq y \}$. If $\omega (n)$ is the number of prime factors of $n$, we prove that the distribution of $\omega(n)$ for $n \in S(x,y)$ is Gaussian for a certain range of $y$ using method of moments. The advantage of the present approach is that it recovers classical results for the range $u=o(\log \log x )$ where $u=\frac{\log x}{\log y}$, with a much simpler proof. 

\end{abstract}

\section{Introduction} 
For an integer $n\geq 2$, let $\omega(n)$ denote the number of distinct prime divisors of $n$. In 1940, Erd\H{o}s and Kac~\cite{Erdos-Kac} in their celebrated  work  studied the distribution of $\omega(n)$ in the interval $[2,N]$. The theorem states that for any real number $x$, we have
\begin{equation} \label{erdos}
 \lim_{N \rightarrow \infty}\frac{1}{N} \#\left\{ n \le N : \frac{\omega(n) - \log \log n}{\sqrt{\log \log n}} \le x \right\}= \Phi(x).
\end{equation}
where $\Phi(x)$ is the \textit{ normal distribution function} defined by
$$\Phi(x):= \frac{1}{\sqrt{2\pi}}\int_{-\infty}^{x} e^{-\frac{t^2}{2}} dt$$
There are several proofs of Erd\H{o}s-Kac Theorem. For instance, it has been proved by Billingsley ~\cite{Bill} and Granville and Soundararajan ~\cite{Andrew} using the method of moments and sieve theory.
Different variations of this theorem have been considered by several authors. In the present note, we shall study the Erd\H{o}s-Kac theorem for $y-$smooth numbers. Recall that
$$S(x, y) := \{n \le x : P (n) \le y \} \qquad x \ge y \ge 2, $$
is the set of $y-$smooth integers, where $P(n)$ is defined as the largest prime factor of $n$, with the convention $P(1)=1$. Also, recall that we set
$$ \Psi(x, y) := | S(x, y)| \qquad x \ge y \ge 2. $$

The main goal of this result is to prove an analogue of \eqref{erdos} with the set $S(x,y)$ in the range
\begin{equation}\label{range}
 \qquad u = o(\log \log y),
\end{equation}
where, as always, $$u:=\frac{\log x}{\log y}.$$

Hildebrand~\cite{Hild}, Alladi~\cite{Alladi}, and Hensley~\cite{Hensley} have considered the distribution of prime divisors of $y-$smooth integers in different ranges of $y$.\\
Hensley proved an Erd\H{o}s-Kac type theorem when $u$ lies in the range 
$$(\log y)^{1/3} \leq u \leq \frac{\sqrt{y}}{2\log y}.$$
By using different method Alladi obtained an analogue of the Erd\H{o}s-Kac Theorem for the following range 
$$u\leq \exp(\log y)^{3/5-\epsilon}.$$ Later, Hildebrand extended previous results to include the range

$$y\geq 3 \,\,\,\,\,\,\, u \geq (\log y)^{20},$$
which is a completion of Alladi and  Hensley's results.\\

Although \eqref{range} does not cover Alladi's, Hensley's and Hildebrand's ranges, our applied method is completely different and much easier than the methods used by previous authors. \\
Our approach is based on the method of moments as Billinglsley used in~\cite{Bill}. We will introduce some approximately independent random variables, and by the Central Limit Theorem, we shall show that this random variables have a normal distribution, then by applying method of moments we get our desired result in \eqref{erdos}.\\ 
The first step of the proof is to apply a truncation on number prime factors. This idea is from original proof of Erd\H{o}s-Kac Theorem ~\cite{Erdos-Kac}.\\\\
For a given real number $y$, set
$$\phi(y): = (\log \log y)^{\sqrt{\log \log \log y}},$$
then $y^{\frac{1}{\phi(y)}}$ is a function that helps us to sieve out all primes exceeding $y^{\frac{1}{\phi(y)}}$, and we will show the contribution of sieved primes is negligible in understanding the distribution of $\omega(n)$.
Before stating the main result, we begin introducing some notation. Let $\omega (n)$ is the number of distinct prime divisors of a $y-$smooth number, namely
$$\omega(n):= \sum_{p\le y} \mathbb{1}_{p|n}(n),$$
where $\mathbb{1}_{p|n}(n)$ is $1$ and $0$ according to the prime $p$ divides $n$ or not.\\
 Let $\mu_\omega(x,y)$ be the mean value of $\omega(n)$, more formally
$$\mu_\omega(x,y) := \mathbb{E}_{n \in S(x, y)} [\omega (n)]= \frac{1}{\Psi(x, y)} \sum_{n \in S(x, y)} \omega(n),$$
and $\sigma_\omega^2(x,y)$ is the variance of $\omega(n)$, defined by
$$\sigma_\omega^2(x,y):= \mathbb{E}\left[\left(\omega(n)-\mu(x,y)\right)^2\right].$$\\
Now we are ready to state the main theorem. 
\begin{theorem}\label{Theorem 1}
For any real number $z$, we have

\begin{equation} \label{result}
\frac{1}{\Psi (x,y)} \# \{n \in S(x, y):  \frac{\omega(n) - \log \log y}{\sqrt{\log \log y}} \le z \} \rightarrow \Phi(z) \quad (y\to \infty)
\end{equation}
holds in the range \eqref{range}.
\end{theorem}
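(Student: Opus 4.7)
The plan is to follow the moment method of Billingsley, adapted to the smooth-integer setting. The argument splits naturally into three parts: (i) truncate $\omega$ to small primes; (ii) establish approximate independence of the resulting indicator variables via a density estimate for $\Psi$; (iii) match moments with a Gaussian. For (i), set $y_1:=y^{1/\phi(y)}$ and write $\omega(n)=\omega'(n)+\omega''(n)$, where $\omega'(n)$ counts the prime divisors of $n$ that are at most $y_1$ and $\omega''(n)$ counts those in $(y_1,y]$. Since $\log\phi(y)=(\log\log\log y)^{3/2}=o(\log\log y)$, one has $\log\log y_1=\log\log y\,(1+o(1))$, so replacing $\log\log y$ by $\log\log y_1$ in the normalisation does not affect the limit. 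I would then show that $\omega''(n)/\sqrt{\log\log y}\to 0$ in probability by estimating the mean and variance of $\omega''$ via the density estimate below, combined with the localised Mertens bound $\sum_{y_1<p\le y}1/p=\log\phi(y)+o(1)$; in the range $u=o(\log\log y)$ this contribution turns out to be $o(\sqrt{\log\log y})$, so the truncation is harmless.

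By the Fr\'{e}chet--Shohat moment theorem it then suffices to prove that, for every fixed integer $k\ge 1$,
\begin{equation*}
\mathbb{E}_{n\in S(x,y)}\!\left[\left(\frac{\omega'(n)-\log\log y_1}{\sqrt{\log\log y_1}}\right)^{k}\right]\longrightarrow\frac{1}{\sqrt{2\pi}}\int_{-\infty}^{\infty}t^{k}e^{-t^{2}/2}\,dt.
\end{equation*}
Expanding the $k$-th power binomially reduces the problem to evaluating joint probabilities
\begin{equation*}
\frac{\#\{n\in S(x,y):p_1\cdots p_j\mid n\}}{\Psi(x,y)}=\frac{\Psi\!\left(x/(p_1\cdots p_j),y\right)}{\Psi(x,y)}
\end{equation*}
for $j\le k$ and primes $p_1,\dots,p_j\le y_1$ (not necessarily distinct), and then comparing these with the analogous sums for independent Bernoulli$(1/p)$ variables.

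The central analytic input is the density estimate
\begin{equation*}
\frac{\Psi(x/d,y)}{\Psi(x,y)}=\frac{1}{d}\bigl(1+o(1)\bigr),\qquad d=p_1\cdots p_r,\ p_i\le y_1,\ r\le k,
\end{equation*}
uniform over such $d$. I would derive it from the Hildebrand--Tenenbaum saddle-point formula $\Psi(x/d,y)/\Psi(x,y)=d^{-\alpha(x,y)}(1+o(1))$ combined with the standard bound $(1-\alpha(x,y))\log y\ll\log(u+1)$. In our range $\log(u+1)=O(\log\log\log y)$, while $\log d\le k\log y/\phi(y)$, so
\begin{equation*}
(1-\alpha(x,y))\log d\ll\frac{k\log\log\log y}{\phi(y)}=o(1),
\end{equation*}
hence $d^{1-\alpha(x,y)}=1+o(1)$. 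Granted this, the joint moments of the indicators $\mathbf{1}_{\{p\mid n\}}$ agree with those of independent Bernoulli$(1/p)$ variables up to $(1+o(1))$ factors; the diagonal contributions from tuples with repetitions are absorbed by $\sum_{p}1/p^{2}=O(1)$, while perfect pairings of indices reproduce the Gaussian moments through $\sum_{p\le y_1}1/p=\log\log y_1+O(1)$.

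The main obstacle, in my view, is establishing the density estimate uniformly over all products $d$ that can appear in the $k$-fold moment expansion: one needs both $1-\alpha(x,y)$ and $(\log d)/\log y$ to stay small enough simultaneously that $d^{1-\alpha(x,y)}=1+o(1)$. The range $u=o(\log\log y)$ is tailored exactly to this, and the cutoff $\phi(y)$ is chosen so that $\phi(y)\to\infty$ while $\log\phi(y)=o(\log\log y)$, which is precisely what makes both the initial truncation and the density estimate succeed together. Once this uniform estimate is in hand, the remainder of the argument is the classical Billingsley moment-method bookkeeping.
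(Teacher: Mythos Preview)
Your strategy is the same as the paper's: truncate at $Y=y^{1/\phi(y)}$, invoke the Hildebrand--Tenenbaum local estimate $\Psi(x/d,y)/\Psi(x,y)=d^{-\alpha}(1+O(1/u_y+\log d/\log x))$, and finish by the method of moments. The one substantive difference is the choice of comparison model. You compare the indicators $\mathbb{1}_{p\mid n}$ with independent Bernoulli$(1/p)$ variables and therefore need the extra step $d^{1-\alpha}=1+o(1)$ for $d$ a product of at most $k$ primes below $Y$; you derive this correctly from $(1-\alpha)\log y\ll\log(u+1)$ and the truncation. The paper instead takes independent $X_p$ with $\mathbb{P}(X_p=1)=\Psi(x/p,y)/\Psi(x,y)$, so that in the difference
\[
A_j=\sum_{p_1,\dots,p_j\le Y}\Bigl[\frac{\Psi(x/(p_1\cdots p_j),y)}{\Psi(x,y)}-\prod_{i}\frac{\Psi(x/p_i,y)}{\Psi(x,y)}\Bigr]
\]
the main terms $1/(p_1\cdots p_j)^{\alpha}$ cancel \emph{exactly}, leaving only the $O$-term from the local estimate; this yields $A_j\ll(\log\log y)^j/(u\,\phi(y))$ directly and then bounds the centred moment difference by $(\log\log y)^k/(u\,\phi(y))$. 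Your route reaches an equivalent bound but with an extra layer of estimation; the paper's choice buys a cleaner cancellation.

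One caveat on your last paragraph: describing the remaining step as ``classical Billingsley bookkeeping'' understates what is needed. In Billingsley's original argument the density error is an \emph{additive} $O(1/N)$, which is harmless after summing over tuples. Here your error is \emph{multiplicative}, $\Psi(x/d,y)/\Psi(x,y)=(1+O(\varepsilon))/d$ with $\varepsilon\ll\log(u+1)/\phi(y)$, and after expanding $(\omega'-\mu)^k$ binomially this propagates to an error of order $\varepsilon\,(\log\log y)^{k}$ in the uncentred $k$-th moment, hence $\varepsilon\,(\log\log y)^{k/2}$ after normalising. It is precisely because $\phi(y)=(\log\log y)^{\sqrt{\log\log\log y}}$ grows faster than every fixed power of $\log\log y$ that this quantity tends to $0$; you should make this quantitative dependence explicit rather than defer to the classical template.
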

Theorem \ref{Theorem 1} is proved in Section $3$. The proof relies on the method of moments and the estimates for $\Psi(x/d,y)/\Psi(x,y)$.\\\\
Let
$$U(x,y):=\{n\leq x : p^{v}||n \Rightarrow v\leq v_p \}$$
be the set of $y-$ultra-smooth integers whose canonical decomposition is free of prime powers exceeding $y$ , where
$$v_p:= \left\lfloor\frac{\log y}{\log p} \right\rfloor.$$
We define
$$\Upsilon(x,y):=\big|U(x,y)\big|.$$
We also have the following theorem 
\begin{theorem}\label{Theorem 2}
For any real number $z$, we have

\begin{equation} \label{result}
\frac{1}{\Upsilon (x,y)} \# \{n \in U(x, y):  \frac{\omega(n) -\log \log y}{\sqrt{\log \log y}} \le z \} \rightarrow \Phi(z) \quad (y\to \infty)
\end{equation}
holds in the range \eqref{range}.

\end{theorem}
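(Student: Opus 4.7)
My plan is to deduce Theorem \ref{Theorem 2} from Theorem \ref{Theorem 1} by showing that $U(x,y)$ captures almost every element of $S(x,y)$ in the range \eqref{range}. This bypasses the need to redo the moment calculation for ultra-smooth integers and reduces the problem to a density computation together with an elementary distribution-transfer argument.

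First I would establish the counting estimate $\Upsilon(x,y) = \Psi(x,y)(1+o(1))$. Since $U(x,y) \subset S(x,y)$, the difference $S(x,y) \setminus U(x,y)$ consists of $y$-smooth integers divisible by some prime power $p^a$ with $p \le y$ and $a \ge v_p + 1$; by construction $p^{v_p+1} > y$. A union bound gives
\[
|S(x,y) \setminus U(x,y)| \;\le\; \sum_{p \le y}\sum_{a \ge v_p+1} \Psi(x/p^a,y).
\]
I would invoke the saddle-point bound $\Psi(x/d,y) \ll \Psi(x,y) d^{-\alpha}$ (the same input that powers the proof of Theorem \ref{Theorem 1}), where the saddle point $\alpha = \alpha(x,y)$ is so close to $1$ in \eqref{range} that $y^{\alpha-1} \log y \to \infty$. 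Bounding each inner geometric series by its leading term, and using $p^{v_p+1} > y$, gives
\[
|S(x,y) \setminus U(x,y)| \;\ll\; \Psi(x,y)\, \pi(y)\, y^{-\alpha} \;=\; o(\Psi(x,y)).
\]

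With this density estimate in hand, the distribution transfer is immediate. For $z \in \mathbb{R}$ let $A(z)$ and $B(z)$ denote the counting functions appearing in the theorems for $S(x,y)$ and $U(x,y)$ respectively. The containment $U(x,y) \subset S(x,y)$ yields $B(z) \le A(z) \le B(z) + |S(x,y) \setminus U(x,y)|$, and dividing by $\Upsilon(x,y)$ while using $\Psi(x,y)/\Upsilon(x,y) = 1+o(1)$ produces
\[
\frac{B(z)}{\Upsilon(x,y)} \;=\; \frac{A(z)}{\Psi(x,y)} + o(1).
\]
Theorem \ref{Theorem 1} then identifies the right-hand side with $\Phi(z)+o(1)$, proving Theorem \ref{Theorem 2}.

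The main obstacle lies in the first step: one must confirm that the ratio bound $\Psi(x/d,y)/\Psi(x,y)$ is effective across the entire range $a \ge v_p + 1$, including the regime where $p^a$ is of comparable size to $x$ and $\Psi(x/p^a,y)$ becomes tiny. Should the saddle-point bound ever lose too much in that regime (for $u$ near the upper end of \eqref{range}), I would complement it by the trivial estimate $\Psi(x/p^a,y) \le x/p^a$, performing a dyadic split on $p^a$; the logarithmic gain $\log y/(u+1) \to \infty$ available in \eqref{range} should comfortably absorb the $\pi(y)$ factor.
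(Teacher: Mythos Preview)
Your approach is correct and is genuinely different from the paper's. The paper does not transfer the distribution from $S(x,y)$ to $U(x,y)$; instead it observes (citing \cite{Ultrafriable}) that
\[
\frac{\Upsilon(x/d,y)}{\Upsilon(x,y)} \;=\; \frac{\Psi(x/d,y)}{\Psi(x,y)}\Bigl\{1+O\Bigl(\tfrac{u\log 2u}{\sqrt{y}\log y}\Bigr)\Bigr\},
\]
and then reruns the entire moment-method argument of Theorem~\ref{Theorem 1} with $\Upsilon$ in place of $\Psi$. Your route is more economical: you need only the \emph{global} density statement $\Upsilon(x,y)\sim\Psi(x,y)$, not the local ratio above, and you avoid redoing the moment computation altogether. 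The paper's route, on the other hand, is more structural and would more readily yield information on the moments of $\omega(n)$ over $U(x,y)$ themselves.

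One remark on your first step. The bound $\Psi(x/d,y)\ll\Psi(x,y)d^{-\alpha}$ is not literally the input of Theorem~\ref{Theorem 1}: there \eqref{localD} is applied only for $d\le y$, while you need $d=p^a>y$. In fact your backup plan already does the whole job and no splitting is necessary: the trivial bound $\Psi(x/p^a,y)\le x/p^a$ together with
\[
\sum_{p\le y}\sum_{a\ge v_p+1}\frac{1}{p^{a}} \;\ll\; \sum_{p\le y}\frac{1}{p^{\,v_p+1}} \;\ll\; \frac{1}{\sqrt{y}}
\]
(split according to $v_p=1$ or $v_p\ge 2$) gives $|S(x,y)\setminus U(x,y)|\ll x/\sqrt{y}$. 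In the range \eqref{range} one has $\rho(u)=y^{-o(1)}$, hence $x/\sqrt{y}=o(x\rho(u))=o(\Psi(x,y))$, which is exactly the density estimate you need. This is essentially Tenenbaum's estimate $\Upsilon(x,y)=\Psi(x,y)\{1+O(u\log 2u/(\sqrt{y}\log y))\}$, which the paper also has at its disposal.
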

The proof of Theorem \ref{Theorem 2} relies on the method of moments and the local behaviour of the function $\Upsilon(x,y)$. By recalling  ~\cite[Corollary 1.3.]{Ultrafriable}, for $u=o(\log \log y)$, we have
$$\frac{\Upsilon(x/d,y)}{\Upsilon(x,y)}= \frac{\Psi(x/d,y)}{\Psi(x,y)}\left\lbrace 1+O\left(\frac{u\log 2u}{\sqrt{y}\log y}\right)\right\rbrace,$$
that is
$$\frac{\Upsilon(x/d,y)}{\Upsilon(x,y)}\sim\frac{\Psi(x/d,y)}{\Psi(x,y)} \quad \text{as} \quad y\to \infty.$$
Considering this relation between the local behaviour of $\Upsilon(x,y)$ and $\Psi(x,y)$ gives us a similar proof as Theorem \ref{Theorem 1}, so we shall avoid proving this theorem.

\begin{center}Acknowledgement 
\end{center}
I would like to thank Andrew Granville and Dimitris Koukoulopoulos for all their advice and encouragement as well as their valuable comments on the earlier version of the present paper. I am also grateful to Adam Harper, Sary Drappeau and Oleksiy Klurman for helpful conversations. 

\section{Preliminaries}
Here we briefly recall some standard facts from probability theory (See Feller ~\cite{Feller} for more details) and we shall give a few important lemmas.

\begin{remark}  \label{feller}
If a random variable $D_n$ converges to $0$ in probability, particularly $ \mathbb{E}\{|D_n|\} \rightarrow 0$, then a second random variable  $U_n$ (on the same probability space) tend to $\Phi$ in distribution if and only if $U_n + D_n \rightarrow \Phi$ in distribution. 
\end{remark}
\bigskip
\begin{remark}\label{moments}
If  distribution function $F_n$ satisfying $\int _{-\infty}^{\infty} x^{k} dF_n (x)\rightarrow \int _{-\infty}^{\infty} x^{k} d\Phi (x)$ as $n\to \infty$, for $k=1, 2, ...$, then $F_{n} (x) \rightarrow \Phi (x)$ for each $x$.\\
\end{remark}
\bigskip
\begin{remark}\label{bound}
If $F_{n}(x) \rightarrow \Phi(x)$ for each x, and if $\int _{-\infty}^{\infty} |x|^{k+\epsilon} dF_{n}(x)$ is bounded in $n$ for some positive $\epsilon$, then, $\int _{-\infty}^{\infty} x^{k} dF_{n}(x) \rightarrow \int _{-\infty}^{\infty} x^{k} d\Phi(x) $.\\
\end{remark}
\bigskip
\begin{remark}\label{normaldis}
 \textit{(A special case of the central limit theorem):} If $X_1,X_2,\dots$ are independent and uniformly bounded random variables with mean $0$ and finite variance $\sigma_i^2$ and if $\sum \sigma_i^2$ diverges then the distribution of $\frac{\sum_{i=1}^n X_i}{\left(\sum_{i=1}^n \sigma_i^2\right)^{1/2}}$ converges to the normal distribution function.
\end{remark}
\bigskip
By recalling ~\cite[Theorem 2.4.]{TenDe} for $m=1$, $d\leq y$ and $y\geq (\log x)^{1+\epsilon}$, we have
\begin{equation}\label{localD}
\Psi(x/d,y)= \frac{\Psi(x,y)}{d^{\alpha}}\left\lbrace1+O\left(\frac{1}{u_y}+\frac{\log d}{\log x}\right)\right\rbrace,
\end{equation}
where $u_y:= u+\frac{\log y}{\log (u+2)}$ and $\alpha=\alpha(x,y)$ denotes the saddle point of the Perron's integral for $\Psi(x,y)$,  which is the solution of the following equation
$$\sum_{p\leq y} \frac{\log p}{p^{\alpha}-1}=\log x.$$
This function will play an important role in this work, so we briefly recall some fundamental facts about this function. By ~\cite[Lemma3.1]{TenDe}, for any $\epsilon >0$, we have the following estimate for $\alpha$
\begin{equation}\label{alphabig} 
\alpha = 1 - \frac{\xi (u)}{\log y}+O\left(\frac{1}{L_\epsilon (y)}+\frac{1}{u(\log y)^2}\right) \qquad \text{if} \qquad y\geq (\log x)^{1+\epsilon},
\end{equation}
where $\xi(u)$ is a unique real non-zero root of the equation $$e^{\xi}= 1+ u\xi,$$
and when $u\ge 3$,  we have 
 \begin{equation}\label{xi}
\xi(u)= \log (u\log u) +O\left(\frac{\log \log u}{\log u}\right).
\end{equation} \\\\
By ~\cite[Lemma 4.1]{TenDe}, we have the following important estimate
\begin{lemma} \label{sumoverp}\textit{(De la Breteche, Tenenbaum)}
For any $x \geq y \ge 2$, uniformly we have
\begin{equation} \label{XXX}
\begin{split}
\sum_{p \le y} \frac{1}{p^{\alpha}} &= 
\log \log y+ \left\lbrace 1+ O\left(\frac{1}{\log y}\right)\right\rbrace \frac{uy}{y+\log x}.
\end{split}
\end{equation}
\end{lemma}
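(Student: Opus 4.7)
The plan is to convert the sum into a one-variable integral via Abel summation against Mertens' theorem, and then to evaluate that integral using the saddle-point expansion \eqref{alphabig} for $\alpha$. First, writing $p^{-\alpha}=p^{-1}\exp((1-\alpha)\log p)$ and invoking \eqref{alphabig} in the form $1-\alpha = \xi(u)/\log y + (\text{lower order})$, I would expand and show that the lower-order terms in $\alpha$ feed through as a multiplicative factor of size $1+O(1/\log y)$ uniformly for $p\le y$. This reduces the task to estimating the cleaner sum $\sum_{p\le y}p^{-1}\exp(\xi(u)\log p/\log y)$.

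Second, I would apply Abel summation with Mertens' estimate $\sum_{p\le t}1/p=\log\log t+M+O(1/\log t)$. The main term becomes
$$\int_{\log 2}^{\log y}\frac{e^{\xi(u)s/\log y}}{s}\,ds.$$
The substitution $v=\xi(u)s/\log y$ turns this into $\int_{c}^{\xi(u)}(e^v/v)\,dv$ with $c=\xi(u)\log 2/\log y$ small. Splitting off $\int_{c}^{\xi(u)}dv/v=\log\log y+O(1)$ produces the first term of the claim, while the residual $\int_{0}^{\xi(u)}(e^v-1)/v\,dv$ can be integrated by parts to give a boundary contribution $e^{\xi(u)}/\xi(u)$. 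The defining equation $e^{\xi(u)}=1+u\xi(u)$ then converts this to $u+1/\xi(u)$, which is the origin of the $u$-scale second term in the statement.

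The refinement from a bare $u$ to the precise factor $uy/(y+\log x)$ would emerge from carefully combining three smaller effects: the Mertens remainder $O(1/\log t)$ weighted by $t^{\xi(u)/\log y}$; the sub-leading correction coming from the $\log y/\log(u+2)$ piece of $u_y$ in \eqref{alphabig}; and the gap between $\int_0^{\xi(u)}(e^v-1)/v\,dv$ and its boundary asymptotic. Using $\log x = u\log y$, these contributions should rearrange into $u\{1+\log x/y\}^{-1}=uy/(y+\log x)$ up to an error of relative size $1/\log y$. The main obstacle I anticipate is twofold: first, making the above identification hold uniformly across all $(x,y)$ with $x\ge y\ge 2$, since \eqref{alphabig} is only stated for $y\ge(\log x)^{1+\epsilon}$ and the complementary range (where $\alpha$ is bounded away from $1$) calls for a direct argument using the defining equation $\sum_{p\le y}(\log p)/(p^\alpha-1)=\log x$ together with Chebyshev-type bounds to show the sum is of order $\pi(y)\asymp y/\log y$, which matches $uy/(y+\log x)$ in that regime; second, treating the small-$\xi(u)$ regime where the integration-by-parts step collapses and one must instead Taylor-expand $e^v=1+v+O(v^2)$ and verify that the answer still reduces to the claimed form.
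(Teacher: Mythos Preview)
The paper does not prove this lemma at all: it is quoted verbatim from \cite[Lemma~4.1]{TenDe} and used as a black box. There is therefore no in-paper argument to compare your sketch against.

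That said, your outline is broadly in the spirit of how such estimates are obtained in the De~la~Bret\`eche--Tenenbaum work: partial summation against the prime-counting function, combined with the saddle-point description of $\alpha$ and the identity $e^{\xi(u)}=1+u\xi(u)$. Two points deserve caution. First, you correctly flag that \eqref{alphabig} only covers $y\ge(\log x)^{1+\epsilon}$, so uniformity down to $y\ge 2$ needs a genuinely different input in the complementary range; your one-line appeal to ``Chebyshev-type bounds'' to get $\sum_{p\le y}p^{-\alpha}\asymp y/\log y$ is plausible for the order of magnitude but does not by itself deliver the asymptotic with the specific leading constant $uy/(y+\log x)$ and a relative error $O(1/\log y)$. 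Second, the step where you assert that the Mertens remainder, the $u_y$ correction, and the integration-by-parts residual ``should rearrange into $u\{1+\log x/y\}^{-1}$'' is the actual content of the lemma in the transition zone $y\approx\log x$, and your proposal gives no mechanism for this cancellation beyond hope. In the original source this is handled not by patching three separate errors together but by working directly with the defining equation $\sum_{p\le y}(\log p)/(p^{\alpha}-1)=\log x$ and a more careful form of the prime number theorem, which produces the factor $y/(y+\log x)$ naturally rather than as a post-hoc reconciliation.
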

Here we use a particular case of Lemma \ref{sumoverp}. If the range of $y$ is restricted to $\log x < y\le x$, we get
 $$\frac{uy}{y+\log x} = u \left(1+O\left(\frac{\log x}{y}\right)\right),$$
thus,
\begin{equation} \label{sumybig}
\sum_{p \le y} \frac{1}{p^{\alpha}} = \log \log y + u + O\left(\frac{u}{\log y}\right) \qquad y > \log x.
\end{equation}\\\\
For $2\leq t\leq y \leq x$, we define 

$$\omega_t(n) := \# \{p: p|n, p\leq t\}= \sum _{p\leq t} \mathbb{1}_{p|n}.$$
By using the saddle point method, Tenenbaum and de la Breteche in ~\cite{TenDe-Inv} obtained an estimate for the expectation and the variance of $\omega_t(n)$. First, we define 

$$M(t)= M_{x,y}(y):= \sum_{p\leq t} \frac{1}{p^{\alpha}}.$$
We state the following lemma from ~\cite{TenDe-Inv}.
\begin{lemma}\textit{(Tenenbaum, de la Breteche)}\label{M(t)}
we have uniformly for  $2\leq t\leq y\leq x$
\begin{equation}
\mu_{\omega_t}(x,y)=M(t)+O(1).
\end{equation}
\end{lemma}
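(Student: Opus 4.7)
My plan is to unfold the expectation as a sum over primes and invoke the local density estimate \eqref{localD} directly. Since for any prime $p\leq t\leq y$ the $y$-smooth integers $n\leq x$ with $p\mid n$ are exactly the numbers $pm$ with $m\in S(x/p,y)$, linearity of expectation gives
$$\mu_{\omega_t}(x,y)=\frac{1}{\Psi(x,y)}\sum_{p\leq t}\#\{n\in S(x,y):p\mid n\}=\sum_{p\leq t}\frac{\Psi(x/p,y)}{\Psi(x,y)}.$$
In the regime relevant to the paper the hypothesis $y\geq(\log x)^{1+\epsilon}$ of \eqref{localD} is comfortably satisfied, so feeding $d=p$ into that estimate produces
$$\mu_{\omega_t}(x,y)=\sum_{p\leq t}\frac{1}{p^{\alpha}}+O\!\left(\frac{1}{u_y}\sum_{p\leq t}\frac{1}{p^{\alpha}}\right)+O\!\left(\frac{1}{\log x}\sum_{p\leq t}\frac{\log p}{p^{\alpha}}\right)=M(t)+E_1+E_2.$$

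The bound $E_2=O(1)$ falls out of the saddle-point equation $\sum_{p\leq y}(\log p)/(p^{\alpha}-1)=\log x$: since $p^{\alpha}\geq p^{\alpha}-1>0$, replacing $p^{\alpha}-1$ by $p^{\alpha}$ can only shrink the sum, so $\sum_{p\leq y}(\log p)/p^{\alpha}\leq\log x$ and hence $E_2\leq 1$. For $E_1$ I invoke Lemma \ref{sumoverp}, which yields $\sum_{p\leq t}p^{-\alpha}\leq M(y)\ll\log\log y+u$, and then compare with the denominator $u_y=u+\log y/\log(u+2)$ via a two-case split: if $u\geq\log\log y$ the lower bound $u_y\geq u$ dominates both terms of the numerator, whereas if $u<\log\log y$ the lower bound $u_y\geq\log y/\log(u+2)$, together with the elementary inequality $\log\log y\cdot\log(u+2)\ll\log y$, finishes the job.

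The main obstacle is precisely this second case of $E_1$: one must check that $\log\log y\cdot\log(u+2)=O(\log y)$, which uses the finer shape of $u_y$ rather than the crude $u_y\geq u$, and is comfortable in the paper's regime $u=o(\log\log y)$ because then $\log(u+2)=(1+o(1))\log\log\log y$. Once this is verified, combining $M(t)+E_1+E_2=M(t)+O(1)$ completes the proof.
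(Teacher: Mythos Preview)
The paper does not actually prove this lemma: it is quoted verbatim from \cite{TenDe-Inv}, where it is obtained by direct saddle-point analysis. So there is no ``paper's own proof'' to compare with; what you have written is an independent argument.

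Your argument is correct, but only in the restricted range $y\geq(\log x)^{1+\epsilon}$ where \eqref{localD} is available. You acknowledge this yourself, and since the paper only ever uses the lemma under the hypothesis $u=o(\log\log y)$ (for which $y$ is far larger than any power of $\log x$), the restriction is harmless for the applications in Lemmas~\ref{mean value} and~\ref{truncating}. The bounds on $E_1$ and $E_2$ are both fine: for $E_2$ the saddle-point identity gives $\sum_{p\leq y}(\log p)/p^{\alpha}\leq\sum_{p\leq y}(\log p)/(p^{\alpha}-1)=\log x$; for $E_1$ your two-case split is sound, and in fact your Case~2 works without invoking $u=o(\log\log y)$ at all, since the case hypothesis $u<\log\log y$ already forces $\log(u+2)\ll\log\log\log y$, whence $(\log\log y)\log(u+2)=o(\log y)$ unconditionally.

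What you do \emph{not} recover is the full uniformity ``$2\leq t\leq y\leq x$'' asserted in the lemma: once $y$ drops below $(\log x)^{1+\epsilon}$ the estimate \eqref{localD} is no longer available and your argument breaks down. The result of de~la~Bret\`eche and Tenenbaum covers that wider range by working with the saddle-point representation of $\Psi$ directly rather than through the local-density corollary \eqref{localD}. So your route is more elementary and entirely adequate for this paper, but it proves a strictly weaker statement than the one being quoted.
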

\bigskip
We now study the expectation of $\omega (n)$, where $n\in S(x,y)$.

\begin{lemma} \label{mean value}
 If $u=o(\log \log y)$, then we have
$$\mu_\omega(x,y) =\log \log y +o(\log \log y).$$
 \end{lemma}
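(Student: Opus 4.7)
The plan is to reduce $\mu_\omega(x,y)$ to a known sum over primes and then apply the established asymptotic for that sum. The key observation is that for $n\in S(x,y)$ every prime factor of $n$ is at most $y$, so $\omega(n)=\omega_y(n)$. Hence $\mu_\omega(x,y)=\mu_{\omega_y}(x,y)$, and Lemma~\ref{M(t)} applied with $t=y$ immediately gives
$$
\mu_\omega(x,y) \;=\; M(y)+O(1).
$$

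Next, I would invoke the special case~\eqref{sumybig} of Lemma~\ref{sumoverp} to evaluate $M(y)=\sum_{p\le y}p^{-\alpha}$, obtaining
$$
M(y) \;=\; \log\log y \,+\, u \,+\, O\!\left(\frac{u}{\log y}\right).
$$
The hypothesis $u=o(\log\log y)$ then absorbs both the $u$-term, the $O(u/\log y)$ error, and the $O(1)$ from the previous step into a single $o(\log\log y)$ remainder, yielding $\mu_\omega(x,y)=\log\log y+o(\log\log y)$ as required.

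The main (mild) obstacle is verifying that our range lies in the validity domain of \eqref{sumybig}, namely $y>\log x$. But $u=o(\log\log y)$ forces $\log\log x=\log u+\log\log y=(1+o(1))\log\log y$, so $\log y\gg\log\log x$ and hence $y\gg\log x$, as needed. If one preferred to bypass Lemma~\ref{M(t)} and argue from scratch, the identity $\sum_{n\in S(x,y)}\omega(n)=\sum_{p\le y}\Psi(x/p,y)$ combined with the local estimate~\eqref{localD} applied with $d=p$ reaches the same conclusion: the $O(1/u_y)$ error contributes $O(M(y)/u_y)=o(1)$ since $u_y\ge \log y/\log(u+2)$, while the $O(\log p/\log x)$ error is $O(1)$ via the defining saddle-point equation $\sum_{p\le y}\log p/(p^\alpha-1)=\log x$ together with $p^\alpha-1\asymp p^\alpha$ for $p\ge 2$.
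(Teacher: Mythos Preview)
Your proof is correct and follows essentially the same route as the paper: apply Lemma~\ref{M(t)} with $t=y$ to get $\mu_\omega(x,y)=M(y)+O(1)$, then invoke \eqref{sumybig} and absorb $u+O(u/\log y)+O(1)$ into $o(\log\log y)$ via the hypothesis. Your additional verification that $u=o(\log\log y)$ places us in the range $y>\log x$ required for \eqref{sumybig}, and the alternative direct argument via \eqref{localD}, are extra details the paper omits but are consistent with its approach.
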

\begin{proof}
Let $t= y$ in Lemma \ref{M(t)}, then we have
$$\mu_\omega (x,y) =\sum_{p\leq y} \frac{1}{p^{\alpha}}+O(1).$$
By using \eqref{sumybig}, we get
$$\mu_\omega(x,y)= \log \log y + u +O(1).$$
Now by letting $u=o(\log \log y)$, we have

$$\mu_\omega(x,y)= \log \log y+ o(\log \log y), $$
and the proof is complete.
\end{proof}
\bigskip
\begin{lemma} \label{sumpt}
If $u=o(\log \log y)$ and $t\leq y^{1/\log u}$, then we have
\begin{equation}\label{Sump<t}
\sum_{p \leq t} \frac{1}{p^{\alpha}} = \log \log t +O(1) 
\end{equation}
\end{lemma}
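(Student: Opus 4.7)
My plan is to reduce the sum to the classical Mertens estimate $\sum_{p \leq t} 1/p = \log\log t + O(1)$ by treating the factor $p^{1-\alpha}$ as a small perturbation. Writing $\beta := 1-\alpha$, I would split
$$\frac{1}{p^\alpha} = \frac{1}{p} + \frac{p^\beta - 1}{p},$$
and exploit the elementary bound $e^x - 1 \ll x$ valid uniformly for $x$ in any bounded interval, applied with $x = \beta \log p$. This yields $p^\beta - 1 \ll \beta \log p$ for $p \leq t$, \emph{provided} $\beta \log p$ stays bounded throughout the range. Summing and using Mertens' companion formula $\sum_{p \leq t} (\log p)/p = \log t + O(1)$, I would obtain
$$\sum_{p\leq t}\frac{1}{p^\alpha} = \log\log t + O(1) + O(\beta \log t),$$
and the lemma then follows once $\beta \log t = O(1)$.

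The verification that $\beta \log t = O(1)$ is the step where the explicit threshold $t \leq y^{1/\log u}$ is used. From \eqref{alphabig} one has $\beta = \xi(u)/\log y + O(1/L_\epsilon(y) + 1/(u(\log y)^2))$, and from \eqref{xi} that $\xi(u) = \log(u\log u) + O(\log\log u/\log u)$ for $u \geq 3$. Combining these with $\log t \leq \log y / \log u$ gives
$$\beta \log t \leq \frac{\xi(u)}{\log u} + o(1) = 1 + O\Bigl(\frac{\log\log u}{\log u}\Bigr) = O(1),$$
as required.

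The main (mild) obstacle is conceptual rather than technical: one has to recognise that the cutoff $t \leq y^{1/\log u}$ is calibrated precisely so that $(1-\alpha)\log p$ stays bounded throughout the sum, which is exactly what legitimises the linearisation $p^\beta - 1 \ll \beta \log p$. Beyond this threshold the factor $p^\beta$ begins to contribute genuine exponential growth and the sum picks up an additional term of size comparable to $u$, as is already visible in Lemma \ref{sumoverp} when $t$ is pushed all the way up to $y$. Once this calibration is noticed, the remainder of the argument is a routine combination of Mertens' theorems with the standard expansions of $\alpha$ and $\xi$.
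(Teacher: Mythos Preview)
Your proof is correct and follows essentially the same approach as the paper: both linearise $p^{1-\alpha}=1+O((1-\alpha)\log p)$ on the grounds that $(1-\alpha)\log p$ stays bounded for $p\le t$, then invoke Mertens' estimates and the expansions \eqref{alphabig}, \eqref{xi} to show the error term $(1-\alpha)\log t \ll \xi(u)/\log u$ is $O(1)$. Your write-up is in fact slightly more explicit than the paper's in justifying why the boundedness of $(1-\alpha)\log p$ is what makes the linearisation legitimate.
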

 \begin{proof}
 We have
$$\sum_{p\leq t} \frac{1}{p^{\alpha}}=\sum_{p\leq t}\frac{1}{pp^{\alpha-1}} = \sum_{p\leq t} \frac{1}{p} \left\lbrace 1+O\left((1-\alpha) \log p\right)\right\rbrace,$$ 
since $(1-\alpha)\log p$ is bounded. By the given estimate for $\alpha$ in \eqref{alphabig} and using Mertens' estimate, we obtain

\begin{equation} 
\begin{split}
\sum_{p\leq t} \frac{1}{p^{\alpha}} &= \sum_{p\leq t}\frac{1}{p} + O\left(\frac{\xi(u)}{\log y} \sum_{p\leq t} \frac{\log p}{p}\right)\\
&= \log \log t + O\left(\frac{\xi(u)}{\log y} \log t\right)
\end{split}
\end{equation}
By applying the estimate of $\xi(u)$ in \eqref{xi}, we get our desired result.
\end{proof}
 \bigskip
Here we will introduce a truncated version of $\omega$ and in the following lemma and corollary we show that the contribution of large prime factors does not affect the expected value of number of prime factors of $n$ and hence the distribution of $\omega(n)$, when $u$ is small enough. We define
\begin{equation}\label{omegaphi}
\omega_{Y} (n): = \sum_{ p \le Y}
 \mathbb{1}_{p|n} (y),
 \end{equation}
 where 
 $$Y:= y^{\frac{1}{\phi(y)}},\quad \text{and} \quad \phi (y) := (\log \log y)^ {\sqrt {\log \log \log y}}.$$
 
  \begin{lemma} \label{truncating}
If $u=o(\log \log y)$, then we have

$$\sum_{p\leq Y} \frac {1}{p^{\alpha}}= \log \log y +O\left((\log \log \log y)^{3/2}\right).$$

\end{lemma}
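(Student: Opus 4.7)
The plan is to apply Lemma \ref{sumpt} directly with the choice $t = Y = y^{1/\phi(y)}$. To legitimize this, I first need to verify the hypothesis $t \le y^{1/\log u}$, i.e., $\phi(y) \ge \log u$. Since $u = o(\log \log y)$, we have $\log u \le \log\log\log y + O(1)$ for $y$ large; on the other hand
$$\log \phi(y) \;=\; \sqrt{\log\log\log y}\,\cdot\,\log\log\log y \;=\; (\log\log\log y)^{3/2},$$
so $\phi(y) = \exp\!\bigl((\log\log\log y)^{3/2}\bigr)$ dwarfs $\log u$, and the hypothesis is satisfied.

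Second, I would invoke Lemma \ref{sumpt} with $t = Y$ to obtain
$$\sum_{p \le Y}\frac{1}{p^{\alpha}} \;=\; \log \log Y + O(1).$$
Then I just unpack $\log \log Y$: since $\log Y = (\log y)/\phi(y)$,
$$\log\log Y \;=\; \log\log y - \log \phi(y) \;=\; \log\log y - (\log\log\log y)^{3/2}.$$
Substituting gives the claimed estimate
$$\sum_{p\le Y}\frac{1}{p^\alpha} \;=\; \log\log y + O\bigl((\log\log\log y)^{3/2}\bigr).$$

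There is really no obstacle here beyond arithmetic bookkeeping: the lemma is essentially a corollary of Lemma \ref{sumpt} together with the definitions of $Y$ and $\phi(y)$. The only point that deserves a sentence of justification is the verification that $Y$ lies in the admissible range $t \le y^{1/\log u}$, and this is why the threshold $Y = y^{1/\phi(y)}$ is calibrated the way it is — the function $\phi(y)$ is chosen precisely to beat $\log u$ comfortably (since $u \le \log\log y$) while still keeping $\log \phi(y) = (\log\log\log y)^{3/2}$ small enough that the error term remains $o(\log\log y)$, which will be needed later to conclude that truncating $\omega$ at $Y$ does not affect the Gaussian limit.
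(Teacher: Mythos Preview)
Your proof is correct and follows exactly the paper's approach: invoke Lemma~\ref{sumpt} with $t=Y$ and then expand $\log\log Y = \log\log y - \log\phi(y) = \log\log y - (\log\log\log y)^{3/2}$. The only difference is that you explicitly verify the hypothesis $Y\le y^{1/\log u}$ (i.e.\ $\phi(y)\ge\log u$), which the paper silently assumes; your argument is slightly more complete in this respect.
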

\begin{proof}
By Lemma \ref{sumpt}, we have
\begin{equation}
\begin{split}
\sum_{p\leq Y} \frac{1}{p^{\alpha}}&= \log \log y- \log \phi(y)+ O(1)\\
& = \log \log y+ (\log \log \log y)^{3/2}+ O(1),
\end{split}
\end{equation}
and we have our desired result.
\end{proof}
Now we define
$$\mu_{\omega_Y} (x,y):= \mathbb{E}\left[\omega_Y(n)\right].$$
In the following lemma we will show $\omega(n)$ can be replaced by $\omega_Y(n)$ in the statement of Theorem \ref{Theorem 1}.
\begin{lemma}
Let $h(n):= \omega(n)- \omega_Y(n)$, then we have
$$\mathbb{P}\left( |h| \leq (\log \log y)^{1/4}\right)= 1-o(1),$$
where $\mathbb{P}$ denotes the probability value.
\end{lemma}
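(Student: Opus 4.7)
The plan is to apply Markov's inequality. Since for any $n \in S(x,y)$ every prime divisor lies in $[2,y]$, the random variable
\[
h(n) = \omega(n) - \omega_Y(n) = \sum_{Y < p \leq y} \mathbb{1}_{p\mid n}
\]
is non-negative, so $|h| = h$; it is therefore enough to obtain a sufficiently small upper bound on $\mathbb{E}[h]$. Swapping the order of summation,
\[
\mathbb{E}[h] = \sum_{Y < p \leq y} \frac{\Psi(x/p,y)}{\Psi(x,y)}.
\]

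I would then invoke the local estimate \eqref{localD} with $d = p \leq y$. For such $p$ the error $O(1/u_y + \log p/\log x)$ reduces to $O(1/u)$ uniformly in $p$, so
\[
\mathbb{E}[h] = \bigl(1 + O(1/u)\bigr)\sum_{Y < p \leq y}\frac{1}{p^{\alpha}}.
\]
Subtracting Lemma~\ref{truncating} from \eqref{sumybig}, the leading $\log\log y$ terms cancel, and one is left with
\[
\sum_{Y < p \leq y}\frac{1}{p^{\alpha}} = u + O\bigl((\log\log\log y)^{3/2}\bigr),
\]
so that $\mathbb{E}[h] = u + O((\log\log\log y)^{3/2})$ in the regime $u=o(\log\log y)$.

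Markov's inequality then gives
\[
\mathbb{P}\bigl(h > (\log\log y)^{1/4}\bigr) \leq \frac{\mathbb{E}[h]}{(\log\log y)^{1/4}} = o(1),
\]
which is the desired bound. The computation itself is routine; the main conceptual obstacle is the calibration between the truncation parameter $\phi(y)$, the threshold $(\log\log y)^{1/4}$, and the inherent size $u$ of $\mathbb{E}[h]$. One needs the cancellation of the $\log\log y$ main terms produced by \eqref{sumybig} and Lemma~\ref{truncating} to be robust enough that $\mathbb{E}[h]$ ends up asymptotically negligible relative to $(\log\log y)^{1/4}$, and the specific choice $\phi(y) = (\log\log y)^{\sqrt{\log\log\log y}}$ is engineered precisely so that the truncation error $(\log\log\log y)^{3/2}$ coming from Lemma~\ref{truncating} beats this threshold comfortably.
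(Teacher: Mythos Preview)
There is a genuine gap in the final step. Your computation $\mathbb{E}[h] = u + O\bigl((\log\log\log y)^{3/2}\bigr)$ is correct, but the Markov conclusion
\[
\frac{\mathbb{E}[h]}{(\log\log y)^{1/4}} = o(1)
\]
does \emph{not} follow from the standing hypothesis $u = o(\log\log y)$. Take for instance $u \asymp \sqrt{\log\log y}$, which is permitted by \eqref{range}; then $\mathbb{E}[h]/(\log\log y)^{1/4} \asymp (\log\log y)^{1/4} \to \infty$ and Markov's inequality gives nothing. Your closing paragraph flags the calibration issue but misdiagnoses the bottleneck: the term that must beat the threshold $(\log\log y)^{1/4}$ is not the $(\log\log\log y)^{3/2}$ truncation error from Lemma~\ref{truncating}, it is the $u$ contributed by \eqref{sumybig}, and no choice of $\phi(y)$ can remove that term.

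The paper's own argument does not use Markov but Chebyshev: after bounding $\mathbb{E}[h]$ it goes on to bound the variance $\sigma_h^2(x,y)$ and concludes via
\[
\mathbb{P}\bigl(|h-\mathbb{E}[h]|\ge (\log\log y)^{1/4}\bigr) \le \frac{\sigma_h^2(x,y)}{(\log\log y)^{1/2}}.
\]
So a first-moment estimate alone is not the intended route; one needs a second-moment (concentration) input together with control on the location of the mean. If you wish to salvage your approach, you must either restrict to $u = o\bigl((\log\log y)^{1/4}\bigr)$, or supply a variance bound as the paper does.
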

\begin{proof}
We first find an estimate for $\mathbb{E}[h]$, we have
$$\mathbb{E}[h]= \mathbb{E}\left[\omega(n)- \omega_Y(n)\right]= \mu_\omega(x,y)- \mu_{\omega_Y}(x,y).$$
Using Lemma \ref{mean value} and \ref{truncating}, we get
\begin{equation}\label{meanh}
\mathbb{E}[h]\ll (\log \log \log y)^{3/2} \leq (\sqrt{\log \log y}).
\end{equation}
 For the variance of $h$, using \eqref{meanh},  we get
 \begin{equation}\label{varh}
 \begin{split}
 \sigma_h^2(x,y) & :=\mathbb{E}\left[(h-\mathbb{E}[h])^2\right]\\ & =(\mathbb{E}[h])^2 \ll (\log \log \log y)^3.
 \end{split}
  \end{equation}
 Now by Chebyshev's inequality and using \eqref{varh}, we have
 \begin{equation}
 \begin{split}
 \mathbb{E} \left(h \geq (\log \log y)^{1/4}\right) &\leq \mathbb{P} \left( \big| h- \mathbb{E}[h]\big| \geq (\log \log y)^{1/4}\right)\\
 &\leq \frac{\sigma_h^2(x,y)}{(\log \log y)^{1/2}} =o(1),
\end{split}
\end{equation}
and we get our desired result.
\end{proof}
By the above Lemma and recalling Remark \ref{feller}, the estimate in \eqref{result} is equivalent to the following 
\begin{equation}\label{Erd'}
\frac{1}{\Psi (x,y)} \# \{n \in S(x, y):  \frac{\omega_Y(n) - \log \log y}{\sqrt{\log \log y}} \le z \} \rightarrow \Phi(z) \quad (y\to \infty),
\end{equation}
which we prove it in the next section.
\section{Proof of Theorem \ref{Theorem 1}}
We begin this section by setting some random variables $X_p$ on  a probability space and one variable for each prime $p$, which satisfies
\begin{equation}\label{xpdef}
P(X_{p}=1 )=\frac{\Psi (\frac{x}{p}, y)}{\Psi (x, y)}, \quad \text{and} \quad  P(X_{p}=0)= 1- \frac{\Psi (\frac{x}{p}, y)}{\Psi (x, y)}.
\end{equation}
The random variables $X_p$'s are independent. \\\\
Now we define the partial sum $S_Y$ as follows
$$S_Y := \sum_{p\leq Y} X_p,$$
where $Y= y^{1/\phi(y)}.$\\
By the definition of $X_p$'s and the estimate in \eqref{localD} and \eqref{sumybig}, we deduce that $S_Y$ has a mean value and variance of the order $\log \log y$ in the range $u=o(\log \log y)$, this means that $\omega_Y(n)$ and $S_Y$ have roughly the same variance and the same mean value. \\\\

In the following lemma we get an upper bound for the difference of $jth$ moments of $\omega_Y$ and $S_Y$, where $j=1,2,3,\dots.$
\begin{lemma} 
If $u=o(\log \log y)$, then for any positive integer $j$, we have
$$ A_j:=\mathbb{E}_{\substack{n \in S(x, y)}}[\omega_Y(n)^j] - \mathbb{E}[S_Y^j] \ll \frac{(\log \log y)^j}{u(\log \log y)^{\sqrt{\log \log \log y}}}.$$
\end{lemma}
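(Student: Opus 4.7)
The plan is to expand both $j$-th moments combinatorially and compare them term-by-term via \eqref{localD}. Expanding $\omega_Y(n)^j$ and swapping the order of summation gives
\[
\mathbb{E}_{n\in S(x,y)}[\omega_Y(n)^j]=\sum_{p_1,\dots,p_j\leq Y}\frac{\Psi(x/d,y)}{\Psi(x,y)},
\]
where $d=d(\vec{p})$ denotes the radical of $p_1\cdots p_j$; here I have used $\prod_i \mathbb{1}_{p_i\mid n}=\mathbb{1}_{d\mid n}$ together with $\#\{n\in S(x,y):d\mid n\}=\Psi(x/d,y)$. For the random moment, independence of the $X_p$'s combined with the Bernoulli identity $X_q^{k}=X_q$ for $k\geq 1$ yields
\[
\mathbb{E}[S_Y^{j}]=\sum_{p_1,\dots,p_j\leq Y}\prod_{q\mid d}\frac{\Psi(x/q,y)}{\Psi(x,y)}.
\]
Hence $A_j$ is a sum of differences, one per $j$-tuple $\vec{p}$, over the same index set.

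Next, I apply \eqref{localD}: for any squarefree $m\leq y$,
\[
\frac{\Psi(x/m,y)}{\Psi(x,y)}=\frac{1}{m^{\alpha}}\bigl\{1+E_m\bigr\},\qquad E_m\ll\frac{1}{u_y}+\frac{\log m}{\log x}.
\]
This applies both to $m=d$ and to each $m=q\mid d$; the hypothesis $d\leq y$ is met since $d\leq Y^{j}=y^{j/\phi(y)}\leq y$ once $\phi(y)\geq j$. Because $d$ is squarefree, $\prod_{q\mid d}q^{\alpha}=d^{\alpha}$, so the common factor $1/d^{\alpha}$ cancels. Expanding
\[
\prod_{q\mid d}(1+E_q)=1+\sum_{q\mid d}E_q+O\Bigl(\bigl(\textstyle\sum_{q\mid d}|E_q|\bigr)^{2}\Bigr),
\]
and noting $\sum_{q\mid d}|E_q|\ll \omega(d)/u_y+\log d/\log x=o(1)$ in our range, the summand contributing to $A_j$ is bounded by
\[
\Bigl|\frac{\Psi(x/d,y)}{\Psi(x,y)}-\prod_{q\mid d}\frac{\Psi(x/q,y)}{\Psi(x,y)}\Bigr|\ll\frac{1}{d^{\alpha}}\Bigl(\frac{\omega(d)}{u_y}+\frac{\log d}{\log x}\Bigr).
\]

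To conclude, I use $\omega(d)\leq j$ and $\log d\leq j\log Y=j\log y/\phi(y)$, so that $\log d/\log x\leq j/(u\phi(y))$, to obtain
\[
|A_j|\ll_j\Bigl(\frac{1}{u_y}+\frac{1}{u\phi(y)}\Bigr)\sum_{p_1,\dots,p_j\leq Y}\frac{1}{d^{\alpha}}.
\]
Grouping the tuples by the number $k\leq j$ of distinct primes occurring, the inner sum is at most $S(j,k)M(Y)^{k}$ where $S(j,k)$ is a Stirling number of the second kind and $M(Y):=\sum_{p\leq Y}p^{-\alpha}=\log\log y+O((\log\log\log y)^{3/2})$ by Lemma \ref{truncating}; summing over $k$ gives $\sum_{\vec p}d^{-\alpha}\ll_j(\log\log y)^{j}$. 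The main technical obstacle is then to verify that $1/u_y\ll 1/(u\phi(y))$: since $u_y\geq \log y/\log(u+2)$, this reduces to $u\phi(y)\log(u+2)=o(\log y)$, which follows at once from $u=o(\log\log y)$ and the quasi-polynomial growth of $\phi(y)=(\log\log y)^{\sqrt{\log\log\log y}}$ in $\log\log y$. Combining these estimates yields $|A_j|\ll_j (\log\log y)^{j}/(u\phi(y))$, as claimed.
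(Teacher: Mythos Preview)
Your proof is correct and follows essentially the same strategy as the paper: expand both $j$-th moments as sums over $j$-tuples of primes $\leq Y$, apply \eqref{localD} to each term, observe that the main $d^{-\alpha}$ factors cancel, and bound the resulting error using $\log d\leq j\log y/\phi(y)$ together with Lemma~\ref{truncating}. The only substantive difference is that you are more careful with tuples having repeated primes---you work with the radical $d$ of $p_1\cdots p_j$, use $X_q^{k}=X_q$, and organize the final sum via Stirling numbers---whereas the paper simply writes ``without loss of generality we assume that $p_i$'s are distinct''; your treatment is cleaner on this point but does not change the argument or the bound.
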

\begin{proof}
By the definition of $\omega_Y$ and $S_Y$, we have
$$\mathbb{E}[\omega_Y^j(n)] =\frac{1}{\Psi(x, y)} \sum_{p_1...p_j \le Y} \sum_{\substack{n \in S(x, y)}} \mathbb{1}_{p_{1}|n}(n)\dots\mathbb{1}_{p_{j}|n}(n),$$
and
$$\mathbb{E}[S_Y^j]=\sum_{p_1...p_j \le Y} \mathbb{E}\left[X_{p_1} \dots X_{p_j} \right].$$
So for the difference of $jth$ moment, we have
\begin{equation}
\begin{split}
A_j &= \sum_{p_1,...,p_j \le y^{\frac{1}{\phi (y)}}}\left(\frac{1}{\Psi(x, y)} \sum_{\substack{n \in S(x, y)}} \mathbb{1}_{p_{1}|n}(n)\dots\mathbb{1}_{p_{j}|n}(n) - \mathbb{E} [X_{p_{1}} ... X_{p_{j}}] \right)\\ &= \sum_{p_1,...,p_j \le y^ {\frac{1}{\phi (y)}}} \left[\frac{\Psi(\frac{x}{p_1 ... p_j}, y)}{\Psi(x, y)} - \prod_{\substack{1 \le i \le j}} \frac{\Psi(\frac{x}{p_i}, y)}{\Psi(x, y)}\right]\\
&=\sum_{p_1,...,p_j \le y^ {\frac{1}{\phi(y)}}}\left[\frac{\Psi(\frac{x}{p_1, ... ,p_j}, y)}{\Psi(x, y)} - \prod_{\substack{1 \le i \le j}} \frac{\Psi(\frac{x}{p_i}, y)}{\Psi(x, y)}\right].
\end{split}
\end{equation}
Without loss of generality we assume that $p_i$'s are distinct, then by using the estimate \eqref{localD}, we have
\begin{equation*}
\begin{split}
A_j &=\sum_{p_1,...,p_j<y^{1/\phi(y)}} \frac{1}{(p_1...p_j)^\alpha} \left\lbrace 1+O\left(\frac{1}{u_y} +\frac{\log p_1...p_j}{\log x}\right)\right\rbrace\\&-\sum_{p_1,...,p_j<y^{1/\phi(y)}} \frac{1}{(p_1...p_j)^\alpha} \prod _{i=1}^j\left\lbrace 1+O\left(\frac{1}{u_y} +\frac{\log p_i}{\log x}\right)\right\rbrace.
\end{split}
\end{equation*}
The main terms in the above subtraction are the same and will be eliminated. Therefore,
\begin{equation}
\begin{split}
A_j&\ll \sum_{p_1,...,p_j<y^{1/\phi(y)}} \frac{1}{(p_1...p_j)^\alpha} \left(\frac{1}{u_y} +\frac{\log p_1...p_j}{\log x}\right)\\
&\ll \sum_{p_1,...,p_j<y^{1/\phi(y)}} \frac{1}{(p_1...p_j)^\alpha} \left(\frac{1}{u_y}+\frac{\log y}{\phi(y)\log x}\right).
\end{split}
\end{equation}
If $u=o(\log \log y)$, then $u_y \geq \frac{\log y}{\log \log \log y}$. So we can ignore the term $\frac{1}{u_y}$. Thus,
$$A_j \ll \sum_{p_1,...,p_j<y^{1/\phi(y)}} \frac{1}{(p_1 \dots p_j)^\alpha} \left(\frac{\log y}{\phi(y)\log x}\right).$$
We now use Lemma \ref{truncating}, and we get the following upper bound for each $A_j$
\begin{equation} \label{Aj}
A_j \ll \frac{(\log \log y)^j}{u(\log \log y)^{\sqrt{\log \log \log y}}}.
\end{equation}
\end{proof}
\bigskip
\begin{proof} [\textbf{Proof of Theorem \ref{Theorem 1}}]
We start our proof by normalizing the random variable $S_Y$.  Define
 $$ S:= \frac{S_Y-\mu_{\omega_Y}(x,y)}{\sqrt{\sigma_{\omega_Y}^{2}(x,y)}}.$$ 
By recalling the central limit theorem, one can say that $S$ has a normal distribution $\Phi(x)$, since $X_p$'s are independent. We set
 $$W:=\frac{\omega_Y (n) - \mu_{\omega_Y}(x,y)}{\sqrt{\sigma_{\omega_Y}^{2}(x,y)}}.$$
 By using the method of moments, we will show that the moments of $W$ are very close to those corresponding sum $S$ and they both converge to the $kth$ moment of normal distribution for every positive integer $k$.\\
By the multinomial theorem, we have
\begin{equation} \label{Delta}
\begin{split}
 \Delta^k &:=\mathbb{E} [(\omega_Y (n) - \mu_{\omega_Y}(x,y))^k] - \mathbb{E}[(S_Y - \mu_{\omega_Y(x,y)})^k] \\&
= \sum_{\substack{j = 1}}^{\substack{k}} \binom{k}{j}\left(-\mu_{\omega_Y}(x,y)\right)^{k-j} \left(\mathbb{E}[\omega_Y (n)^j] - \mathbb{E}[S_Y^j]\right).
\end{split}
\end{equation}
By combining the upper bound in \eqref{Aj} with \eqref{Delta}, we arrive to the following estimate

\begin{equation}
\begin{split}
\Delta^k &\ll \frac{1}{(\log \log y)^{\sqrt{\log \log \log y}}}  \sum_{j=1}^{k} \binom{k}{j} (- \mu_{\omega_Y}(x,y))^{k - j} {(\log \log y)}^{j} \\
&=  \frac{1}{u(\log \log y)^{\sqrt{\log \log \log y}}} \left(\log \log y + \mu_{\omega_Y}(x,y)\right)^{k}.\\
\end{split}
\end{equation}
Now using Lemma \ref{mean value}, we have
\begin{equation}
\begin{split}
\Delta^k &\ll \frac{{(\log \log y)}^{k}}{u(\log \log y)^ {\sqrt{\log \log \log y}}}.
\end{split} 
\end{equation}
Thus, $$\Delta^k \rightarrow 0 \quad \text{as} \quad x,y\rightarrow \infty.$$
We showed that the difference of $kth$ moments goes to $0$ for large values of $y$. By the remark \eqref{moments}, we conclude that two random variables $S$ and $W$ have a same distribution.\\\\\\
By Remark \ref{normaldis}, the random variable $S$ has a normal distribution. It remains to show that the moments of $S$  are very close to those of the normal distribution.\\
 By recalling Remark \ref{bound}, we need to prove that the moment $E[S^{k}]$ are bounded in $n$ when $k$ increases.\\
In fact, we will show that for each $k\in \mathbb{N}$
\begin{equation}\label{bounded}
\sup _{n} \big|\mathbb{E}\left(\frac{(S_{Y} -\mu_{\omega_Y}(x,y))^{k}}{(\sqrt{\sigma_{\omega_Y}^{2} (x,y)} )^{k}}\right)\big| < \infty.
\end{equation}
To complete the proof, we define the random variables $Y_{p}= X_{p} -\frac{\Psi(x/p,y)}{\Psi(x,y)}$, which are independent.\\
 We have

\begin{equation}\label{inner}
\begin{split}
\mathbb{E} \left(\left(S_{Y}- \mu_{\omega_Y} (x,y)\right)^{k}\right)=\sum _{j=1}^{k} \sum ^{'} \frac{k!}{k_1 ! ...k_j !} \sum_{p_1 ...p_j \le y^{\frac{1}{\phi(y)}}} \mathbb{E} [{Y_{p_{1}}^{k_{1}}}]...\mathbb{E} [{Y_{p_{j}}^{k_{j}}}].
\end {split}
\end{equation}
Where $\sum^{'}$ is over $j$-tuple  $(k_1 ,..,k_{j})$, where $k_1,\dots,k_j$ are positive integers, and $k_{1} +...+k_{j} =k$. \\
By the definition of $Y_p's$, we have $\mathbb{E}[Y_{p_{j}}] =0$.\\
To avoid zero terms, we can assume that $k_{i}> 1$ for each $1\leq i\leq j$. Also we have $|Y_{p}| \le 1$. 
Thus, $$\mathbb{E}[Y_{p} ^{k_{i}}] \le \mathbb{E}[Y_{p}^{2}] \quad \forall k_{i}>2.$$
Therefore, the value of inner sum in \eqref{inner} is at most

$$\sum_{p_1 ...p_j \le y^{\frac{1}{\phi(y)}}} \mathbb{E} [{Y_{p_{1}}^{k_{1}}}]...\mathbb{E} [{Y_{p_{j}}^{k_{j}}}] \le \left(\sum_ {p\le {y^{\frac{1}{\phi(y)}}}} \mathbb{E}[Y_{p}^ {2}]\right)^{j} = \sigma^{2j} (x,y).$$
Each $k_{i}$ is strictly greater than $1$, and we have $k_1 +..+k_j = k$, therefore $2j \le k$
 and this implies that
 
 $$\mathbb{E}\left(\frac{\left(S_Y -\mu_{\omega_Y}(x,y)\right)^{k}}{\left(\sqrt{\sigma_{\omega_Y}^{2} (x,y)} \right)^{k}}\right) \le \sum _{j=1}^{k} \sum ^{'} \frac{k!}{k_{1} ! ...k_{j}! }, $$
from which \eqref{bounded} follows. \\We proved all necessary and sufficient conditions such that \eqref{Erd'} and consequently \eqref{result} are true. 
\end{proof}

\bibliographystyle{plain}
\bibliography{references}

\end{document}